\def\H{{\cal H}}
\def\L{\mathcal{L}}
\def\R{\mathbb{R}}
\def\H2{H^2(\R^N)}
\def\L2{L^2(\R^N)}
       \newtheorem{lemma}{\bf Lemma}
       \newtheorem{theorem}{\bf Theorem}[section]
       \newtheorem{remark}{\bf Remark}[section]
        \newtheorem{claim}{\bf Claim}[section]
       \numberwithin{equation}{section}
\begin{document}

\title{{\LARGE   On some model equations of Euler and Navier-Stokes  equations}
 \footnotetext{\small E-mail addresses: dudp954@nenu.edu.cn  }
 }

\author{{Dapeng Du}\\[2mm]
\small\it  School of Mathematics and Statistics, Northeast Normal University,\\
\small\it   Changchun 130024, P.R.China \\
 }

\date{}

\maketitle

\begin{quote}
\small \textbf{Abstract}:
We propose a two-dimensional generalization of Constantin-Lax-Majda model\cite{CLM}. Some results about singular solutions are given.
 This model might be the first step toward the singular solutions of the  Euler equations.

Along the same line (vorticity formulation), we present some further model equations.  They possibly models various aspects of difficulties related with the singular solutions of the Euler and Navier-Stokes equations.  We also make some discussions on the possible connection between turbulence and the singular solutions of the Navier-Stokes equations.

\indent \textbf{Keywords}: Euler equations; Navier-Stokes equations;  singular solutions;  turbulence

\indent \textbf{Mathematics  Subject Classification (2010):} 35Q31;
35Q30;  35B44;  76F02

\indent \textbf{}


\end{quote}



\section{Introduction}

 The incompressible Navier-Stokes equations  describe the motion of incompressible viscous flow.   Whether singular solutions exist in 3D is one of the famous seven millennium prize problems. In terms of singular solutions, the study of incompressible  Euler equations looks most probably to be the first major step.  For the sake of convenience, we will omit {\it incompressible } from now on. 
The first nonlocal model was constructed by Constatin, Lax and Majda\cite{CLM}. 
They constructed a one-dimensional model and got the singular solution explicitly. The motivation is the vorticity formulation.  There are many  developments after this model (\cite{Sc, CCH, De} and others).

In this paper, we give some high dimensional generalizations of the Constantin-Lax-Majda model. The study of them might help the understanding of singular solutions to the Euler and Navier-Stokes equations. The motivation is still vorticity formulation.

  We first present a two-dimensional zero order scalar model. One may think of it as a nonlocal ODE. The good understanding of it possibly is among the first steps toward singular solutions of the Euler and Navier-Stokes equations.

 Then we give further models. In some sense, the vorticity formulation provides an explanation why the singular solutions to the Navier-Stokes equations are so hard. Roughly speaking, the zero order term is pro-singularity term. The first and second order terms are perturbations. To be able to construct original solution from vorticity, we need the initial vorticity to be divergence-free. Any of them is hard to handle. The combination of them composes one of the most difficult problems in mathematics.

 One potential major ppplication of singular solutions of the Navier-Stokes equations is about turbulence.
 There is a prevailing viewpoint that the turbulent theory is deeply connected with the solutions of the Navier-Stokes equations.  Actually one can interpret the behaviour of turbulence by the guessed properties of singular solutions to the Navier-Stokes equations.

 This paper is organized as follows.  In section two, we discuss the two-dimensional zero order scalar models.
 In section three, further models are given.  The possible connection between singular solutions of the Navier-Stokes equations and turbulence theory is presented in section four.  The notations we use are standard ones.

\section{  Zero order scalar models}
The vorticity formulation of the three-dimensional Euler equations is the following:

\begin{equation}\label{2.1}
w_{t}+u \cdot \nabla w-\nabla uw=0,
\end{equation}

In~$R^{3}$, the velocity~$u$~is given by the Biot-Savart law:
\begin{equation}\label{1.4}
u=\mbox{\rm curl}~\Delta^{-1}w.
\end{equation}
Note that the equations (\ref{2.1})  and (\ref{3.1}) are well defined when the vorticity is not divergence-free.
Define
\begin{equation}\label{2.2}
Z_{ij}=\partial _{ij}\Delta^{-1},x\in\Omega,
\end{equation}
where $\Omega$ is a bounded domain in $R^n$ or $R^n$ itself, $n \geq 2$.
If the domain is bounded, then the boundary condition for the $\triangle$ is the homogeneous Drichilet boundary condition:

\begin{equation}\label{2.3}
\Delta^{-1}w\mid_{\partial\Omega}=0.
\end{equation}
In $R^3$,  the term~$\nabla u$ can  be rewritten as
\begin{equation}\label{2.4}
\begin{split}\nabla u&=\nabla \mbox{\rm curl}\Delta^{-1}w\\
&=\left(
\begin{array}{ccc}
Z_{21}w_{3}-Z_{31}w_{2}&Z_{31}w_{1}-Z_{11}w_{3}&Z_{11}w_{2}-Z_{21}w_{1}\\
Z_{22}w_{3}-Z_{32}w_{2}&Z_{32}w_{1}-Z_{12}w_{3}&Z_{12}w_{2}-Z_{22}w_{1}\\
Z_{23}w_{3}-Z_{33}w_{3}&Z_{33}w_{3}-Z_{13}w_{3}&Z_{13}w_{2}-Z_{23}w_{1}\\
\end{array}
\right).
\end{split}
\end{equation}
\begin{remark}
If the domain has boundary, then in general, (\ref{1.4}) is not valid. Neither is \eqref{2.4}. But conceptually, the equality (\ref{2.4}) is still close to be true. We refer to \cite{Dur} and references therein for more details about the reconstruction of the velocity from the vorticity.
\end{remark}

The Constantin-Lax-Majda model has the following form:
\begin{equation}\label{1.1}
\theta_{t}=H(\theta)\theta,
\end{equation}

where $H$ is the Hilbert transform.

One nature generalization of Constantin-Lax-Majda model is:\\
{\bf Model 1.}
\begin{equation}\label{2.5}
w_{t}=Z_{11}w ~ w,x\in\Omega\subset R^{2}.
\end{equation}

\begin{claim}
The model equation is locally well-posed in $W^{1,p}, p>2$, $i.e. \ \forall w_{0}\in W^{1,p},\exists w\in C((0,T),W^{1,p}),s.t. \ w(x,0)=w_{0} $,\ $w$ satifies (\ref{2.5}).
\end{claim}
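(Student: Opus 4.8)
The plan is to treat (\ref{2.5}) as an ODE in the Banach space $X = W^{1,p}(\Omega)$, $p>2$, and apply the standard Picard–Lindelöf / contraction-mapping argument. The crucial structural fact is that $Z_{11} = \p_{11}\Delta^{-1}$ is a Calderón–Zygmund operator (a second-order Riesz-type transform composed with the inverse Laplacian, hence a singular integral of convolution type in $\R^2$, or the analogous operator with Dirichlet data on $\Omega$), so it is bounded on $L^p(\Omega)$ for all $1<p<\infty$, and it commutes with (or interacts benignly with) first derivatives, so it is also bounded on $W^{1,p}(\Omega)$. The second crucial fact is that $W^{1,p}(\Omega) \hookrightarrow L^\infty(\Omega) \cap C^{0,1-2/p}$ for $p>2$ in two dimensions (Morrey's inequality / Sobolev embedding), and that $W^{1,p}$ is a Banach algebra under pointwise multiplication in this range. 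These two facts together make the right-hand side $F(w) := (Z_{11}w)\,w$ a locally Lipschitz map from $X$ to $X$.

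First I would record the mapping properties precisely: (i) $\|Z_{11}w\|_{W^{1,p}} \le C_p\|w\|_{W^{1,p}}$, using $L^p$-boundedness of $Z_{11}$ on $\Omega$ (in the bounded case, via elliptic regularity: if $v = \Delta^{-1}w$ with homogeneous Dirichlet data then $\|v\|_{W^{2,p}} \le C\|w\|_{L^p}$, and on $W^{3,p}$-type estimates for the derivative, or more simply work in $\R^2$ where $Z_{11}$ is a genuine Fourier multiplier of order zero and commutes with $\p_i$); (ii) the algebra estimate $\|fg\|_{W^{1,p}} \le C\|f\|_{W^{1,p}}\|g\|_{W^{1,p}}$ for $p>2$, which follows from Hölder plus the $L^\infty$ embedding. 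Combining, $\|F(w)\|_{W^{1,p}} \le C\|w\|_{W^{1,p}}^2$, and similarly $\|F(w_1)-F(w_2)\|_{W^{1,p}} \le C(\|w_1\|_{W^{1,p}}+\|w_2\|_{W^{1,p}})\,\|w_1-w_2\|_{W^{1,p}}$, so $F$ is Lipschitz on bounded sets. Then I would set up the integral formulation $w(t) = w_0 + \int_0^t F(w(s))\,ds$ and run the contraction mapping theorem on $C([0,T]; \bar B_R)$ with $R = 2\|w_0\|_{W^{1,p}}$ and $T$ chosen small depending on $R$ and $C$; this yields a unique fixed point, hence a unique local solution in $C([0,T]; W^{1,p})$, and a standard bootstrap/continuation argument gives the maximal interval and blow-up criterion. (Strictly the statement as written says $C((0,T),W^{1,p})$; one in fact obtains $C([0,T),W^{1,p})$, which is stronger.)

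The main obstacle — and the only genuinely delicate point — is the boundedness of $Z_{11}$ on $W^{1,p}(\Omega)$ when $\Omega$ is a bounded domain with the homogeneous Dirichlet boundary condition (\ref{2.3}). On $\R^2$ there is nothing to do: $Z_{11}$ is a Fourier multiplier bounded on every $W^{s,p}$, $1<p<\infty$, and it commutes with all derivatives, so the estimate in (i) is immediate. On a bounded domain, $\Delta^{-1}$ with Dirichlet data gives $\|\Delta^{-1}w\|_{W^{2,p}}\le C\|w\|_{L^p}$ by Agmon–Douglis–Nirenberg elliptic regularity (assuming $\p\Omega$ smooth enough, say $C^2$ or $C^{1,1}$), but taking one more derivative is not free because $\p_k Z_{11}w$ does not reduce to $Z_{11}\p_k w$ near the boundary — the boundary condition is not preserved by differentiation. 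The cleanest fix is to either (a) state and prove the claim on $\Omega=\R^2$ (or the torus $\T^2$), where everything is a clean multiplier argument, or (b) on a bounded smooth domain, replace $W^{1,p}$ by a scale adapted to the Dirichlet Laplacian (e.g. the domain of a fractional power of the Dirichlet Laplacian, or $W^{1,p}\cap W_0^{1,p}$-type spaces) on which $Z_{11}$ maps boundedly by spectral/semigroup methods; I expect the author intends the $\R^2$ case, where the argument above goes through verbatim. Everything else — the algebra property, the Lipschitz estimate, the contraction, the continuation — is routine.
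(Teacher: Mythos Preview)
Your proposal is correct and is precisely the ``pretty standard'' argument the paper has in mind: the paper in fact omits the proof entirely, saying only ``The proof is pretty standard and we omitted it.'' Your sketch---Calder\'on--Zygmund boundedness of $Z_{11}$, the Banach-algebra property of $W^{1,p}$ for $p>2$ in two dimensions, and a contraction-mapping / Picard iteration---is exactly the standard route, and your caveat about the bounded-domain case is a level of care beyond what the paper itself provides.
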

The proof is pretty standard and we omitted it.  

Next we present some elementary singular solutions of Model 1. One feature of the zero order model is that self-similar singular solutions could be considered in bounded domain.

Let
\begin{equation}\label{2.6}
w=\frac{1}{T-t}Q(\frac{x}{T-t}).
\end{equation}
Then the equation for $Q$ is
\begin{equation}\label{2.7}
Z_{11}Q~Q=Q.
\end{equation}
The interesting thing is that when the domain is an ellipse, the equation (\ref{2.7}) has constant solution.

\begin{theorem}\label{claim2.3}
Assume $\Omega=\{ax_{1}^{2}+bx_{1}x_{2}+cx_{2}^{2}<1\},a,c>0,b^{2}-4ac<0$, then the equation (\ref{2.7}) has a constant solution $Q=1+\frac{c}{a}$.
\end{theorem}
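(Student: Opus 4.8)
The plan is to use the fact that a constant profile $Q\equiv\lambda$ collapses (\ref{2.7}) to the purely pointwise identity $(Z_{11}\lambda)\,\lambda=\lambda$, so that everything reduces to computing $Z_{11}\lambda=\partial_{11}\Delta^{-1}\lambda$ in closed form, which the special geometry of the ellipse makes possible. First I would determine $v:=\Delta^{-1}\lambda$, i.e. the unique solution of $\Delta v=\lambda$ in $\Omega$ with $v|_{\partial\Omega}=0$. Writing $p(x)=ax_1^2+bx_1x_2+cx_2^2$ so that $\Omega=\{p<1\}$, the natural ansatz is $v=\alpha\,(p(x)-1)$ for a constant $\alpha$: it vanishes on $\partial\Omega$ automatically, and since the cross term $bx_1x_2$ is harmonic one gets $\Delta v=2\alpha(a+c)$, which is constant. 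Choosing $\alpha=\lambda/(2(a+c))$ makes $\Delta v=\lambda$; because $\Omega$ is a bounded domain — a genuine ellipse, thanks to $a,c>0$ and $b^2-4ac<0$ — uniqueness for the Dirichlet Laplacian identifies this $v$ with $\Delta^{-1}\lambda$.

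Then I would simply differentiate: $Z_{11}\lambda=\partial_{11}v=2a\alpha=\dfrac{a\lambda}{a+c}$. Substituting into (\ref{2.7}) turns the equation into $\dfrac{a\lambda}{a+c}\cdot\lambda=\lambda$, and discarding the trivial root $\lambda=0$ leaves $\lambda=\dfrac{a+c}{a}=1+\dfrac{c}{a}$, which is exactly the asserted constant solution. (The same computation incidentally shows this is the only nonzero constant solution.)

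I do not anticipate a serious obstacle; the one step deserving care is the claim that $\Delta^{-1}$ applied to a constant is again a quadratic polynomial on $\Omega$. This is precisely where the ellipse is special: on a general bounded domain $\Delta^{-1}1$ is not a polynomial and $\partial_{11}\Delta^{-1}1$ is not constant, so no constant self-similar profile need exist, whereas on the ellipse it works because every quadratic form has constant Laplacian and the defining function of $\partial\Omega$ is itself such a quadratic. Beyond that, the argument is just elementary linear algebra together with uniqueness for the homogeneous Dirichlet problem.
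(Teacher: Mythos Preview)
Your proposal is correct and follows exactly the same approach as the paper: compute $\Delta^{-1}$ of a constant on the ellipse via the quadratic ansatz $\alpha(p(x)-1)$, read off $Z_{11}1=\dfrac{a}{a+c}$, and solve the resulting algebraic equation for the constant $Q$. You supply more justification (uniqueness for the Dirichlet problem, why the ellipse is special), but the argument is identical in substance.
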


\begin{proof}
By the definition of ellipse, we know\\
\begin{equation*}
\Delta^{-1}~1=\frac{1}{2(a+c)}(ax_{1}^{2}+bx_{1}x_{2}+cx_{2}^{2}-1).
\end{equation*}
So
\begin{equation*}
Z_{11}~1=\frac{a}{a+c}.
\end{equation*}
Therefore $Q=1+\frac{c}{a}$ solves (\ref{2.7}). The theorem is proven.
\end{proof}

Going back to the original equation, we see that $w=\frac{a+c}{a} \cdot \frac{1}{T-t}$ is a singular solution to Model 1 if the domain is an ellipse. Theorem 2.1 seems to be the first singular solution result regarding two-dimensional nonlocal models.

Consider the following simpler version of Model 1.
\begin{equation}\label{a}
(Z_{11}+aZ_{22})w ~w=w.
\end{equation}
\begin{claim}
Assume $a>0$, the domain $\Omega$ is rectangle or whole space. Then for any measurable set $E\subset \Omega$, (\ref{a}) has solution $w\in L^2(\Omega)$ such that
\begin{equation*}
(Z_{11}+aZ_{22})w\big|_{\Omega \setminus E}=1.
\end{equation*}
\end{claim}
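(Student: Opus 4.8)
The plan is to turn \eqref{a} into a linear inversion problem and solve it by compressing the operator
\[
L:=Z_{11}+aZ_{22}=(\p_{11}+a\p_{22})\Delta^{-1}
\]
to the subspace of $L^2$ functions supported in $\Omega\setminus E$. The starting point is the elementary remark that, in the a.e.\ pointwise sense, \eqref{a} is equivalent to the alternative ``$w(x)=0$ or $(Lw)(x)=1$'' at a.e.\ $x\in\Omega$, since $(Z_{11}+aZ_{22})w\cdot w=w$ reads literally $w\,((Lw)-1)=0$. Hence it suffices to manufacture $w\in L^2(\Omega)$ that vanishes a.e.\ on $E$ and satisfies $Lw=1$ a.e.\ on $F:=\Omega\setminus E$: on $F$ the equation then reads $1\cdot w=w$, on $E$ it reads $(Lw)\cdot 0=0$, and the side condition $(Z_{11}+aZ_{22})w|_{\Omega\setminus E}=1$ holds by construction.

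The one structural input the argument needs is that, for $a>0$, $L$ is a bounded, self-adjoint, coercive operator on $L^2(\Omega)$, with $\langle Lv,v\rangle\ge c_0\|v\|_{L^2}^2$ and $c_0:=\min(1,a)>0$. On $\Omega=\R^2$ this is immediate, because $L$ is the Fourier multiplier with real symbol $(\xi_1^2+a\xi_2^2)/|\xi|^2$, which takes values in $[\min(1,a),\max(1,a)]$. On an axes-parallel rectangle $\Omega=(0,\ell_1)\times(0,\ell_2)$ with the Dirichlet condition \eqref{2.3}, the product eigenfunctions $\phi_{mn}(x)=\sin(m\pi x_1/\ell_1)\sin(n\pi x_2/\ell_2)$ of $-\Delta$ are simultaneously eigenfunctions of $\p_{11}$ and of $\p_{22}$, hence of $L$, with eigenvalue $(m^2/\ell_1^2+a\,n^2/\ell_2^2)/(m^2/\ell_1^2+n^2/\ell_2^2)$ again in $[\min(1,a),\max(1,a)]$; since $\{\phi_{mn}\}$ is an orthogonal basis of $L^2(\Omega)$, the asserted properties of $L$ follow.

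With that in hand the construction is short. Assume $|F|<\infty$ --- automatic for a bounded rectangle, and unavoidable for $\Omega=\R^2$ since $Lw\in L^2$ cannot be identically $1$ on a set of infinite measure. Let $P$ be multiplication by $\mathbf 1_F$ and $V:=P\,L^2(\Omega)=L^2(F)$, a closed subspace of $L^2(\Omega)$. The compression $T:=PLP$ is bounded and self-adjoint on $V$, and for $v\in V$ (so $Pv=v$) one has $\langle Tv,v\rangle=\langle Lv,v\rangle\ge c_0\|v\|_{L^2}^2$; hence $T:V\to V$ is invertible by Lax--Milgram. Since $\mathbf 1_F\in V$, set
\[
w:=T^{-1}\mathbf 1_F\in V\subset L^2(\Omega),
\]
extended by $0$ on $E$. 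Then $w=0$ a.e.\ on $E$, and $Pw=w$ gives $P\,Lw=PLPw=Tw=\mathbf 1_F$, i.e.\ $(Lw)(x)=1$ for a.e.\ $x\in\Omega\setminus E$. By the first paragraph, $w$ solves \eqref{a} and has the required identity.

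The only genuinely non-routine ingredient is the coercivity of $L$, hence of every compression $PLP$, and this is precisely where $a>0$ enters: for $a\le 0$ the symbol (resp.\ the rectangle eigenvalues above) can vanish, $L$ loses invertibility, and the scheme collapses. Everything else --- the pointwise reformulation of \eqref{a}, the identification $P\,L^2(\Omega)=L^2(F)$, and the inversion of $T$ --- is standard, so I expect the coercivity step, together with the finite-measure bookkeeping in the whole-space case, to be where the real content lies.
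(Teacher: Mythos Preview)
Your proof is correct and follows essentially the same path as the paper's: both reduce \eqref{a} to inverting the compression of $L_a=Z_{11}+aZ_{22}$ to $L^2(\Omega\setminus E)$, using coercivity of $L_a$; the paper establishes invertibility of this compression by hand (one-to-one, bounded inverse, closed range, onto via self-adjointness) where you package the same content as Lax--Milgram. Two small points where you are actually more careful than the paper: your coercivity constant $\min(1,a)$ is the correct one (the paper's constant $a$ is only right for $a\le 1$), and your remark that $|\Omega\setminus E|<\infty$ is required in the whole-space case --- since $Lw\in L^2$ cannot equal $1$ on a set of infinite measure --- identifies a restriction the paper's statement omits.
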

\begin{proof}
Without loss of generality, we may assume the rectangle is $(0,\pi) \times (0,\pi)$.
In this case,~$\sin k\cdot x,~k=(k_{1},k_{2})$,~$k_{i}$~positive integers are complete orthogonal basis and $w=\lambda_k \sin k \cdot x $.
So we have
\begin{equation*}
Z_{11}w=\sum_{k_{1},k_{2}=1}^{\infty}\frac{k_{1}^{2}}{k_{1}^{2}+k_{2}^{2}}\lambda_{k}.
\end{equation*}
Therefore
\begin{equation}\label{1.5}
\int_{(0,\pi)\times (0,\pi)}Z_{11} ww \mbox{\rm d}x\geq0.
\end{equation}

In whole space case,~similarly we have
\begin{equation}\label{1.6}
\int_{R^{2}}Z_{11} ww \mbox{\rm d}x=\int_{R^{2}}\frac{k_{1}^{2}}{k_{1}^{2}+k_{2}^{2}}|\tilde{w}|^{2}~\mbox{\rm d}k\geq0,
\end{equation}
where~$\widetilde{w}$~is the Fourier transform of~$w$. Also note that $Z_{11}$ is self-adjoint.

Define
\begin{equation*}
L_a=Z_{11}+aZ_{22}.
\end{equation*}
So under the assumptions of the current claim, $L_a$ is  coercive:
\begin{equation}
\langle L_a w,w\rangle_{L^2}\ge a\parallel w\parallel_{L^2}.
\end{equation}
The proof mainly comes from the coerciveness of $L_a$. Below is the details.

Note $$(\ref{a})\Longleftrightarrow (L_aw-1)w=0. $$
So the solving of (\ref{a}) reduces to find $w$ such that
\begin{equation*}
\begin{cases}
w=0, \quad x\in E,\\
L_aw=1,\quad x\notin E.
\end{cases}
\end{equation*}
Define
\begin{equation}\label{b}
\tilde L_a\tilde w=L_a w\big|_{\Omega\setminus E},
\end{equation}
\begin{equation*}
\quad w=\begin{cases}0, x\in E,\\
\tilde w, x\notin E,
\end{cases}
\tilde w\in L^2(\Omega\setminus E).
\end{equation*}

In some sense $\tilde L_a$ is the restriction of $L_a$ on $L^2(\Omega\setminus E)$. Note $\tilde L_a$ is also self-adjoint. Below we show $\tilde L_a$ is one-to-one and the inverse of it is bounded.\\
1) one-to-one. \\
Assume $\tilde w\in L^2(\Omega\setminus E)$ and $\tilde L_a \tilde w=0$. Define
 \begin{equation*}
w=\begin{cases}
0,\quad x\in E,\\
\tilde w,\quad x\in \Omega\setminus E
\end{cases}
\mbox{as in (\ref{b})},
\end{equation*}
 Then we have
  \begin{equation}\label{c}
  \langle L_a w, w\rangle_{L^2(\Omega)}=\langle \tilde L_a\tilde w, \tilde w\rangle_{L^2(\Omega\setminus E)}=0.
  \end{equation}
 Therefore
   \begin{equation*}
    w\equiv 0,
      \end{equation*}
      which implies $\tilde w\equiv 0$. So $\tilde L_a$ is one-to-one.\\
      2) The inverse is bounded.\\
      Using (\ref{b}) and (\ref{c}), we get
      \begin{eqnarray*}
      \langle\tilde L_a\tilde w, \tilde w\rangle&=&\langle L_a w, w\rangle\nonumber\\&\ge& a\parallel w\parallel_{L^2}^2\nonumber\\&\ge &a\parallel \tilde w\parallel_{L^2}.
      \end{eqnarray*}
      So
      \begin{eqnarray*}
      \parallel\tilde w\parallel^2\le a^{-1}\parallel\tilde w\parallel_{L^2}\parallel\tilde L_a\tilde w\parallel_{L^2}.
      \end{eqnarray*}
      ie.
      \begin{eqnarray*}
      \parallel\tilde w\parallel_{L^2}\le a^{-1}\parallel\tilde L_a\tilde w\parallel_{L^2}.
      \end{eqnarray*}
      Hence the inverse of $\tilde L_a$ is bounded. \\
     Next we show $\tilde L_a$ is onto.\\
      Assume the contrary. Then $\exists\tilde w\notin \tilde L_a(L^2(\Omega\backslash E))$. Since the inverse of $\tilde L_a$ is bounded, the space $\tilde L_a(L^2(\Omega\setminus E))$ is closed. Denote $\tilde w_1$ the projection of $\tilde w$ on $\tilde L_a(L^2(\Omega\setminus E))$. Then
      $$\langle \tilde w-\tilde w_1, \tilde L_a(L^2(\Omega\setminus E))\rangle=0$$
      So $\forall \tilde u \in L^2(\Omega\setminus E)$,
      $$\langle \tilde L_a(\tilde w-\tilde w_1),\tilde u \rangle=0.$$
      This means $\tilde L_a(\tilde w-\tilde w_1)=0$.
      So $\tilde w=\tilde w_1$. A  contradiction.
      Therefore $\tilde L_a$ is onto.\\
      After proving $\tilde L_a$ is onto, the proof is essential finished. Let $\tilde w=\tilde L_a^{-1}1$ and \begin{eqnarray*}w=\begin{cases}0,\quad x\in E,\\
      \tilde w, \quad x\notin E. \end{cases}\end{eqnarray*}
      Then $w$ is a solution to (\ref{a}). The claim is proven.
\end {proof}
\begin{remark}
 The claim above might help a little bit in the study of singular solutions of Model 1.
      \end{remark}
There is a small generalization of Model 1. \\
{\bf Model $1^{'}$}\\
\begin{equation}\label{2.8}
w_{t}=Z_{12}w~w,x\in\Omega\subset R^{2}.
\end{equation}

Model $1^{'}$ seems a little bit harder than Model 1. There are some related evidences. For instance, assume $\varphi\in L^{2}(R^{2})$,~then

$$\int_{R^{2}}Z_{12}\varphi\varphi \mbox{\rm d}x=\int_{R^{2}}\frac{k_{1}k_{2}}{k_{1}^{2}+k_{2}^{2}}|\tilde{\varphi}|^{2} \mbox{\rm dk} \quad
\mbox{\rm will change sign}  $$
where $\tilde{\varphi}$ is the Fourier transform of $\varphi$.
Also for the simple singular solution in the ellipse, we need $b$ to be  non-zero.
More precisely,~we have the following theorem.
\begin{theorem}
Assume $\Omega=\{ax_{1}^{2}+bx_{1}x_{2}+cx_{2}^{2}<1\},a,c>0,b^{2}-4ac<0,b\neq0$,~then
\begin{equation*}
w(t)=\frac{1}{T-t}\cdot\frac{2a+2c}{b}
\end{equation*}
is a self-similar singular solution to (\ref{2.8}).
\end{theorem}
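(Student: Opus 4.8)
The plan is to run, essentially verbatim, the computation behind Theorem~\ref{claim2.3}, with $Z_{11}$ replaced by $Z_{12}$; the whole content is the evaluation of $Z_{12}$ on the constant function $1$ over the ellipse $\Omega$. First I would record, exactly as in the proof of Theorem~\ref{claim2.3}, that the solution of $\Delta\phi=1$ in $\Omega$ with the homogeneous Dirichlet condition \eqref{2.3} is the quadratic polynomial
\[
\Delta^{-1}1=\frac{1}{2(a+c)}\bigl(ax_1^2+bx_1x_2+cx_2^2-1\bigr),
\]
since its Laplacian equals $\tfrac{2a+2c}{2(a+c)}=1$ and it vanishes on $\partial\Omega$ by the very definition of the ellipse. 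Differentiating once in $x_1$ and once in $x_2$ annihilates the pure squares and the constant, leaving $Z_{12}1=\partial_{12}\Delta^{-1}1=\dfrac{b}{2(a+c)}$. This is precisely the place where the hypothesis $b\neq0$ is used: the mixed second derivative of the quadratic above isolates the cross coefficient $b$, which must be nonzero for the amplitude constructed below to be finite and nonzero.

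Next I would look for a solution of \eqref{2.8} that is constant in the space variable, $w=w(t)$. For such a $w$ we have $Z_{12}w=w\,Z_{12}1=\dfrac{b}{2(a+c)}\,w$, so \eqref{2.8} collapses to the scalar Riccati-type ODE $w_t=\dfrac{b}{2(a+c)}\,w^2$. Integrating this with the blow-up normalization gives $w(t)=\dfrac{2(a+c)}{b}\cdot\dfrac{1}{T-t}$, which is smooth on $(-\infty,T)$ and diverges as $t\to T^-$; hence it is a finite-time singular solution. Equivalently, one may phrase this through the self-similar ansatz \eqref{2.6}: for a constant profile $Q$ the transport term $\xi\cdot\nabla Q$ drops out, the profile equation reduces to $Z_{12}Q\,Q=Q$ in the spirit of \eqref{2.7}, and the value of $Z_{12}1$ just computed forces $Q=\dfrac{2(a+c)}{b}$, which is exactly the claimed amplitude.

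I do not expect a genuine obstacle here: the argument is a one-line modification of Theorem~\ref{claim2.3}, and the only things worth being careful about are (i) checking that the polynomial in the first step really does satisfy the boundary condition \eqref{2.3}, and (ii) flagging that, in contrast with Model~1 where every ellipse worked, the cross term $b$ is now indispensable — which is consistent with, and in fact quantifies, the remark preceding the theorem that Model~$1'$ is ``a little bit harder'' than Model~1. If one wishes, one can also note afterwards that this constant-in-$x$ solution is the simplest member of a would-be family of self-similar profiles solving $Z_{12}Q\,Q=Q$, but that is not needed for the statement.
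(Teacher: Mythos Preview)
Your proposal is correct and follows exactly the approach the paper intends: the paper's own proof simply says ``The proof is essentially the same as Theorem~\ref{claim2.3},'' and you have carried out precisely that computation, replacing $Z_{11}$ by $Z_{12}$ and reading off $Z_{12}1=\frac{b}{2(a+c)}$ from the explicit quadratic $\Delta^{-1}1$.
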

\begin{proof}
The proof is essentially the same as Theorem 2.1.
\end{proof}
 For zero order models, bounded domain case might be simpler than the whole space case since the former is compact region.

\section{Further models}
The vorticity formulation of the three-dimensional Navier-Stokes equations is the following:
\begin{equation}\label{3.1}
w_{t}-\Delta w+u\cdot\nabla w-\nabla u w=0.
\end{equation}
By simplifying the zero order term, removing first order or second order term, we could get various model equations of the Navier-Stokes equations. The usual way of simplifying zero order term is to  replace $\nabla u$ with simpler zero order operater.

{\bf Examples.}
\begin{equation}\label{3.2}
w_{t}=
\left(
\begin{array}{cc}
w_{1}+Z_{11}w_{1}&\frac{1}{2}w_{1}\\
\frac{1}{2}w_{1}&w_{1}+Z_{11}w_{1}\\
\end{array}
\right)w,
~x\in\Omega\subset R^{2},w\in R^{2},
\end{equation}

\begin{equation}\label{3.3}
w_{t}=\nabla u~w,~x\in\Omega\subset R^{3},w\in R^{3},
\end{equation}

\begin{equation}\label{3.4}
w_{t}+u\cdot\nabla w-Z_{11}w~w=0,~x\in R^{2},
\end{equation}

\begin{equation}\label{3.5}
w_{t}-\Delta w-Z_{11}w~w=0,~x\in R^{2},
\end{equation}

\begin{equation}\label{3.6}
w_{t}-\Delta w+u\cdot\nabla w-Z_{11}w~w=0,~x\in R^{2},
\end{equation}

In 2D,~$u=(-\partial_{x_{2}}\Delta^{-1}w,\partial_{x_{1}}\Delta^{-1}w)$.

Roughly speaking, equation (\ref{3.2}) is a simple situation that the model is a system (The matrix in (\ref{3.2}) is symmetric, and the equation also has simple singular solutions similar with Theorem 2.1). Equation (\ref{3.3})  models the pro-ingularity effect of zero order term in the vorticity formulation. Equations (\ref{3.4}), (\ref{3.5}) and (\ref{3.6}) model the effects of first order perturbation, second order perturbation, first and second order perturbation combined for scalar equations.

\begin{remark}
Different from zero order model, it seems that for first and second order models, the whole space case is inclined to be first considered. One reason is that self-similar singular solutions for PDEs only occur in whole space.
\end{remark}
Next we make some further discussions.

{\bf .Divergence-free requirement on initial vorticity} First note that if we have solutions to the vorticity equations in 3D and the initial vorticity is divergence free, then we can construct solutions to the original equations.
\begin{lemma}
Assume $w$ satisfy  (2.1),(2.2)~or~(3.1),(2.2) in $R^{3}$,~$\mbox{\rm div}~w(x,0)=0$,~$w$~are regular enough and have good decay at infinity. Then $u=\mbox{\rm curl}~\Delta^{-1}w$ solves the original Euler or Navier-Stokes equations.
\end{lemma}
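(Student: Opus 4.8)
The goal is to verify that the reconstructed velocity $u = \operatorname{curl}\Delta^{-1}w$ satisfies the original Euler (resp.\ Navier-Stokes) equations, given that $w$ solves the vorticity equation and $\operatorname{div} w(\cdot,0) = 0$. The plan breaks into two parts: first show that $\operatorname{div} w(x,t) = 0$ is propagated in time, and second show that once the vorticity field is divergence-free, the standard identity relating $\operatorname{curl}$ of the Euler/Navier-Stokes momentum equation to the vorticity equation can be run in reverse.

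First I would establish propagation of the divergence-free constraint. Taking the divergence of the vorticity equation \eqref{2.1} (resp.\ \eqref{3.1}) and writing $d = \operatorname{div} w$, one computes using vector calculus identities (and $\operatorname{div} u = \operatorname{div}\operatorname{curl}\Delta^{-1}w = 0$) that $d$ satisfies a linear transport equation of the form $d_t + u\cdot\nabla d + (\text{l.o.t.})\, d = 0$ (with an extra $-\Delta d$ in the Navier-Stokes case). The key algebraic point is that the terms coming from $\operatorname{div}(u\cdot\nabla w)$ and $\operatorname{div}(\nabla u\, w)$, after the cancellations afforded by $\operatorname{div} u = 0$ and the symmetry of second derivatives, combine into something proportional to $d$ itself with no inhomogeneous source. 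Since $d(\cdot,0) = 0$, uniqueness for this linear equation (here the regularity and decay hypotheses on $w$ are used) gives $d \equiv 0$ for all $t$.

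Second, with $\operatorname{div} w \equiv 0$ in hand, I would show $u$ solves the momentum equation. Since $w = \operatorname{curl} u$ (this requires $\operatorname{div} w = 0$ so that $\operatorname{curl}\operatorname{curl}\Delta^{-1}w = -\Delta\Delta^{-1}w + \nabla\operatorname{div}\Delta^{-1}w$ reduces to $w$ up to harmonic pieces, which vanish by the decay assumption), the quantity $u\cdot\nabla w - \nabla u\, w$ equals $\operatorname{curl}(u\cdot\nabla u)$ by the standard vector identity $\operatorname{curl}(u\cdot\nabla u) = u\cdot\nabla(\operatorname{curl} u) - (\operatorname{curl} u)\cdot\nabla u$ together with $\operatorname{div} u = 0$. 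Hence the vorticity equation says $\operatorname{curl}(u_t + u\cdot\nabla u) = 0$ (resp.\ $\operatorname{curl}(u_t - \Delta u + u\cdot\nabla u) = 0$), so the bracketed vector field is curl-free and, being suitably decaying, equals $-\nabla p$ for some scalar $p$. Combined with $\operatorname{div} u = 0$ by construction, this is exactly the Euler (resp.\ Navier-Stokes) system.

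The main obstacle is the first step: verifying that taking the divergence of the nonlinear vorticity equation really produces a \emph{closed} linear homogeneous equation for $d = \operatorname{div} w$, i.e.\ that all terms not already proportional to $d$ cancel. This is the computation that genuinely uses the special structure of the stretching term $\nabla u\, w$ and the Biot-Savart relation; everything else is either a standard vector-calculus identity or an appeal to uniqueness/decay, which the hypotheses are tailored to supply. In the write-up I would carry out this divergence computation explicitly and then invoke uniqueness for the resulting transport (or convection-diffusion) equation.
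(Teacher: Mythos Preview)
Your argument is correct, but the paper takes a quite different route. Instead of propagating the constraint $\operatorname{div} w=0$ and then inverting the curl, the paper argues by \emph{comparison with a known solution}: starting from $u_0=\operatorname{curl}\Delta^{-1}w(\cdot,0)$, it invokes the standard local well-posedness theory for the Euler (resp.\ Navier--Stokes) equations to produce a solution $u^{(1)}$ on some interval $[0,T_1)$; its vorticity $w^{(1)}=\operatorname{curl}u^{(1)}$ then automatically satisfies the vorticity system \eqref{2.1}--\eqref{1.4} (resp.\ \eqref{3.1}--\eqref{1.4}) with the same initial data as $w$, and an energy estimate gives $w^{(1)}\equiv w$, hence $u\equiv u^{(1)}$; a continuation argument extends this to the full interval. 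Thus the paper never computes $\operatorname{div}$ of the vorticity equation and never explicitly checks that $u\cdot\nabla w-\nabla u\,w$ is the curl of $u\cdot\nabla u$; it outsources all of that to the cited local existence theory and to uniqueness for the vorticity equation. Your approach is more self-contained and does not require importing the full local well-posedness theorem (which is arguably a heavier tool than the lemma being proved), while the paper's approach is shorter and avoids the index computations in your ``main obstacle'' step. Both are standard, and either would be acceptable; just be aware that the referee of record chose the comparison/uniqueness route.
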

\begin{proof}
We will mainly present the proof for the Euler equations. The situation for the Navier-Stokes equations is very similar. Let $w\in C((0,T),~H^{2}(R^{^{3}}))$.~Let $u_{0}=\mbox{\rm curl}~\Delta^{-1}w(x,0)$,~then $u_{0}\in W^{2,6}$.~Standard local existence for Euler equations (for instance,\cite{MB}) implies there exists a solution $u^{(1)}\in C((0,T_{1}),W^{2,6}),~T_{1}=T_{1}(\parallel u_{0}\parallel_{W^{2,6}})$.~Set $w^{(1)}=\mbox{\rm curl}~u^{(1)}$.~Note that $\mbox{\rm curl}~\Delta^{-1}w$ is the unique solution to\\
\begin{equation}\label{3.15}
\begin{cases}
\mbox{\rm curl}~v=w_{1},~~x\in R^{3},\\
\mbox{\rm div}~v=0. \\
\end{cases}
\end{equation}
Then $w^{(1)}$ satisfies (2.1)-(2.2) and $w^{(1)}(x,0)=w(x,0)$.~Take $w^{(2)}=w^{(1)}-w$.~Standard energy estimates imply $w^{(2)}\equiv 0$.~Now the Euler equations case is finished by standard continuation argument.~For the Navier-Stokes equations,~we can assume $w\in C((0,T),L^{2}(R^{3}))$. The rest is essentially the same.
\end{proof}
Note that the singular solutions with well-posed initial data are regular before the singularities occur.  So Lemma 3.1 implies the singular solutions to the vorticity equations will generate singular solutions to the original equations. Therefore as long as a model is system, there are two situations: the initial data is divergence-free or not. In general, the divergence-free requirement would make situation harder.

If the dimension is higher than three, it is convenient to think the velocity as 1-form and vorticity as 2-form. In this case, the divergence-free requirement becomes $\mbox {\rm d} w_0=0$,  where d is the exterior differential and $ w_0$ is initial vorticity. We refer to \cite{Se} for more details in the case of $R^n, n>3$.

{\bf . Skew-symmetry of zero order term}
In the roughest sense, one may think of the zero order term $\nabla u w$ as $w^2, w \in R$. Therefore one might expect that it has some pro-singularity effect.
\begin{claim}
Singular solutions generated from constant don't hold true for equation (\ref{3.3}).
\end{claim}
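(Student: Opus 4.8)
The plan is to imitate, for equation~(\ref{3.3}), the constant self-similar construction behind Theorem~\ref{claim2.3}, and to watch it degenerate. For Model~1 the profile $Q\equiv\mathrm{const}$ works because $Z_{11}$ applied to a constant returns a \emph{nonzero} constant; I claim that the analogous computation for the vortex-stretching term $\nabla u\,w$ returns identically zero, which is exactly the ``skew-symmetry'' pointed out above.

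Concretely, I would take $\Omega\subset\R^{3}$ to be an ellipsoid (to stay parallel with Theorem~\ref{claim2.3}), set $g=\Delta^{-1}1$ with homogeneous Dirichlet data, and test the ansatz $w(x,t)=f(t)\,v$ with $v\in\R^{3}$ a fixed constant vector and $f$ a scalar function of $t$ alone. Then $\Delta^{-1}w=f(t)\,g\,v$, so
\begin{equation*}
u=\mbox{\rm curl}\,\Delta^{-1}w=f(t)\,\big(\nabla g\times v\big),
\end{equation*}
i.e. $u$ is, up to the factor $f(t)$, a gradient crossed with the constant vector $v$. Next I would evaluate the zero order term on the right of~(\ref{3.3}) from the matrix~(\ref{2.4}): since $v$ is constant, its $i$-th component is
\begin{equation*}
\big(\nabla u\,w\big)_{i}=\sum_{j}(\nabla u)_{ij}\,w_{j}=f(t)\sum_{j}(\partial_{i}u_{j})\,v_{j}=f(t)\,\partial_{i}\big(u\cdot v\big),
\end{equation*}
and $u\cdot v=f(t)\,(\nabla g\times v)\cdot v=0$, being a scalar triple product with a repeated factor; hence $\nabla u\,w\equiv0$. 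Equivalently, substituting $w=(v_{1},v_{2},v_{3})$ directly into~(\ref{2.4}) one sees that every entry of $\nabla u\,w$ is a sum of terms $\pm(Z_{ij}1)\,v_{k}v_{\ell}$ that cancel in pairs. Therefore~(\ref{3.3}) collapses to $f'(t)\,v=0$, so $f$ is constant and no solution of the form $w=f(t)v$ can become singular — the exact opposite of Model~1, where the same reduction gives $f'=\lambda f^{2}$ with $\lambda=Z_{11}1>0$, the pro-singularity mechanism of Theorem~\ref{claim2.3}.

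The computation is short, so there is no real analytic obstacle; the content is conceptual. What one has to get right is the formalization of ``generated from constant'' and the index convention in~(\ref{2.4}) (its $(i,j)$ entry is $\partial_{i}u_{j}$) — after that the whole statement is just the identity $v\times v=0$. I would also note that the vanishing of $\nabla u\,w$ uses \emph{only} that $w$ is spatially constant, not the shape of $\Omega$, so the obstruction is robust; and that if one prefers to write the zero order term in the genuine vortex-stretching form $(w\cdot\nabla)u$, the same ansatz still fails, now because on an ellipsoid it produces a vector perpendicular to $v$, which can equal $f'(t)v$ only if it is zero.
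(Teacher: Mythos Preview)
Your argument is correct and is essentially the same as the paper's: both show that $\nabla u\,w$ vanishes on a spatially constant $w$ by the antisymmetry of the curl structure, the paper via the Levi--Civita symbol identity $\delta^{i}_{jl}c_{l}c_{i}=0$ on an ellipsoid, you via the equivalent triple-product identity $(\nabla g\times v)\cdot v=0$. Your version is slightly more general in that it does not use the ellipsoidal shape (the paper fixes $\Omega=\{a_{ij}x_{i}x_{j}<1\}$ so that $Z_{jm}1=a_{jm}$ is constant), and your side remark about the genuine stretching form $(w\cdot\nabla)u$ is also correct, but the core mechanism is identical.
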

\begin{proof}
Define the generalized Kronecker sign:
\begin{equation*}
\delta_{jl}^{i}=
\begin{cases}
~1,~~(i,j,l)~is~an~even~arrangement~of~(1,2,3), \\
-1,~(i,j,l)~is~an~odd~arrangement~of~(1,2,3),\\
~0,~~otherwise. \\
\end{cases}
\end{equation*}

So
\begin{equation*}
\begin{split}(\nabla u)_{mi}&=\partial_{m}u_{i}\\
&= \partial_{m}\delta_{jl}^{i}\partial_{j}\Delta^{-1}w_{l}\\
&=\delta_{jl}^{i}Z_{jm}w_{l}.
\end{split}
\end{equation*}
And
\begin{equation}\label{3.7}
(\nabla u~w)_{m}=\delta_{jl}^{i}Z_{jm}w_{l}w_{i}.
\end{equation}
Given any constant vector $c\in R^{3}$,~$Z_{jm}~c=a_{jm}c$. Here the domain is ~$\{a_{ij}x_{i}x_{j}<1\},~a_{ij}=a_{ji},~\sum\limits_{i=1}^{3} a_{ii}=1$, and $(a_{ij})$~is positive definite.
Therefore
\begin{equation}\label{3.8}
\begin{split}&(\nabla uw)_{m}\mid_{w=c}\\
=&\delta_{jl}^{i}Z_{jm}c_{l}c_{i}\\
=&\delta_{jl}^{i}a_{jm}c_{l}c_{i}\\
=&0.
\end{split}
\end{equation}
The claim is proven.
\end{proof}
The claim above suggests that the zero order term has certain algebraic skew-symmetry, which may cause some more trouble in the study of singular solutions.

{\bf .Possible steps toward  Euler equations}

  In the luckiest scenario, the study of model equations might lead to the existence of singular solutions of the Euler equations and even Navier-Stokes equations.  The following are possible steps toward Euler equations.

\begin{enumerate}

 \item Model 1,
 \item \eqref{3.3},
 \item \eqref{3.4},
 \item The whole Euler equations.
 
\end{enumerate}

\begin{remark}
It was suggested in \cite[~p.3]{Du05} that the degree of difficulty for singular solutions to Navier-Stokes equations may decrease a lot in higher dimensions. Probably this scenario will also hold true for certain second order models.
\end{remark}

\begin{remark}
 For zero order models, if there are no divergence-free requirements on the initial data, the self-similar singular solutions probably exist.
 But for more complicated situations, one might have to work on singular solutions with general form.
 One evidence is that the Navier-Stokes equations don't have self-similar singular solutions at any dimensions\cite{NRS,Ts}. There were also no reliable numerical evidence that Euler equations have self-similar singular solutions.
\end{remark}

\section{Possible connection with turbulence}
It is well accepted that the main features of turbulence is irregular, random, and chaotic. Based on what's known on Navier-Stokes equations and the features of turbulence, it seems reasonable to make the following guess.

\noindent {\bf Conjecture 4.1. } {\it The singular solutions of three-dimensional Navier-Stokes equations generically are fluctuated.}

Using the conjecture above, we could interpreter the turbulence in the following way. Since the solution is fluctuated singular, the average of it is irregular. The randomness comes from the infinite amplifying effect of fluctuated singular solution over arbitrarily small experimental error. The chaotic behavior could be explained in the similar way.

\begin{remark}
 The difficulties for singular solutions of the Navier-Stokes equations might be viewed as the combination of difficulties for the local convection-diffusion equations with energy conservation and Euler equations. 
The results on model equations\cite{DL, PS} and numerical simulation for Euler equations suggest that,  in dimension five and higher the usual singular solutions possibly are also typical for Navier-Stokes equations. There is no information in dimension four so far.
\end{remark}

At this stage little is known regarding the singular solution of the Navier-Stokes equations. Therefore the application in the turbulence theory is not much. With the development of the mathematical theory on the singular solutions, more and more applications could be expected. To some degree, the good understanding of turbulence may depend on the good understanding of singular solutions to the three-dimensional Navier-Stokes equations.

\section*{Acknowledgements}
The author wishs to thank Hongjie Dong and Vladimir Sverak for valuable comments. The author also would like to thank Yipeng Shi for wonderful discussions on turbulence. 
 The author was partially supported by NSFC grant No. 11571066.

\bibliographystyle{amsplain}

\end{document}